\DeclareMathOperator{\supp}{supp}
\newcommand{\meas}{m}
\definecolor{myblue}{rgb}{0,0,0.6}     
\begin{document}
\newcommand{\rf}[1]{(\ref{#1})}
\newcommand{\ri}{{\mathrm{i}}}
\newcommand{\re}{{\mathrm{e}}}
\newcommand{\rd}{\mathrm{d}}
\newcommand{\R}{\mathbb{R}}
\newcommand{\N}{\mathbb{N}}
\newcommand{\C}{\mathbb{C}}
\newcommand{\cI}{\mathcal{I}}
\newcommand{\scrD}{\mathscr{D}}
\newcommand{\scrS}{\mathscr{S}}
\newcommand{\bd}{\mathbf{d}}
\newcommand{\bx}{\mathbf{x}}
\newcommand{\bxi}{\boldsymbol{\xi}}
\newcommand{\hsnorm}[1]{||#1||_{H^{s}(\bs{R})}}
\newcommand{\hnorm}[1]{||#1||_{\tilde{H}^{-1/2}((0,1))}}
\newcommand{\norm}[2]{\left\|#1\right\|_{#2}}
\newcommand{\normt}[2]{\|#1\|_{#2}}
\newtheorem{thm}{Theorem}[section]
\newtheorem{lem}[thm]{Lemma}
\newtheorem{defn}[thm]{Definition}
\newtheorem{prop}[thm]{Proposition}
\newtheorem{rem}[thm]{Remark}
\newcommand{\tH}{\widetilde{H}}
\newcommand{\Hze}{H_{\rm ze}} 	
\newcommand{\uze}{u_{\rm ze}}		
\DeclareRobustCommand
{\mathringbig}[1]{\accentset{\smash{\raisebox{-0.1ex}{$\scriptstyle\circ$}}}{#1}\rule{0pt}{2.3ex}}
\newcommand{\cirH}{\mathringbig{H}}
\newcommand{\cirHs}{\mathringbig{H}{}^s}
\newcommand{\cirHm}{\mathringbig{H}{}^m}
\newcommand{\deO}{{\partial\Omega}}
\newcommand{\OO}{{(\Omega)}}
\newcommand{\Rn}{{(\R^n)}}

\title{A note on properties of the restriction operator 
on Sobolev spaces}
\author{
D.\ P.\ Hewett\footnotemark[1], A.\ Moiola\footnotemark[2]}

\renewcommand{\thefootnote}{\fnsymbol{footnote}}
\footnotetext[1]
{Department of Mathematics, University College London, Gower Street, London WC1E 6BT, UK. Email: \texttt{d.hewett@ucl.ac.uk}}
\footnotetext[2]{Department of Mathematics and Statistics, University of Reading, Whiteknights PO Box 220, Reading RG6 6AX, UK.
Email: 
\texttt{a.moiola@reading.ac.uk}}

\maketitle
\renewcommand{\thefootnote}{\arabic{footnote}}

\begin{abstract}
In our companion paper \cite{ChaHewMoi:13} we studied a number of different Sobolev spaces on a general (non-Lipschitz) open subset $\Omega$ of $\R^n$, defined as closed subspaces of the classical Bessel potential spaces $H^s\Rn$ for $s\in\R$.
These spaces are mapped by the restriction operator to certain spaces of distributions on $\Omega$.
In this note we make some observations about the relation between these spaces of global and local distributions.
In particular, we study conditions under which the restriction operator is or is not injective, surjective and isometric between given pairs of spaces. 
We also provide an explicit formula for minimal norm extension (an inverse of the restriction operator in appropriate spaces) in a special case.
\end{abstract}
%

\section{Preliminaries}
We study properties of Sobolev spaces on a general (non-Lipschitz) open set $\Omega\subset\R^n$. In our companion paper \cite{ChaHewMoi:13} we studied two types of spaces: those consisting of distributions on $\R^n$ (specifically, $\tH^s(\Omega)$, $\cirH^s(\Omega)$, $H^s_{\overline\Omega}$, defined below), and those consisting of distributions on $\Omega$ itself (specifically, $H^s(\Omega)$ and $H^s_0(\Omega)$, again defined below).  
In this note we study properties of the restriction operator as a mapping between the two types of spaces. 
The results presented here, while elementary, do not seem to be available in the literature, which generally focusses on the more standard Lipschitz case (cf.\ e.g.~\cite{McLean}).
As in \cite{ChaHewMoi:13}, our motivation is the study of integral equations on non-Lipschitz sets. (For a concrete example see \cite[\S4]{ChaHewMoi:13}, where we consider boundary integral equation reformulations of wave scattering problems involving fractal screens.)

We begin by defining the Sobolev spaces involved. 
Our presentation  follows that of \cite{ChaHewMoi:13}, which in turn is broadly based on \cite{McLean}. 
Given $n\in \N$, let $\scrD(\R^n)$ denote the space of compactly supported smooth test functions on $\R^n$, and for any open set $\Omega\subset \R^n$ let
$\scrD(\Omega):=\{u\in\scrD(\R^n):\supp{u}\subset\Omega\}$. 
For $\Omega\subset \R^n$ let $\scrD^*(\Omega)$ denote the space of distributions on $\Omega$ (anti-linear continuous functionals on $\mathscr{D}(\Omega)$)\footnote{Following Kato \cite{Ka:95}, we work with dual spaces of anti-linear functionals, which simplifies certain aspects of the presentation. Our results translate trivially to dual spaces of linear functionals; see \cite[\S2]{ChaHewMoi:13} for further discussion.}.
Let $\mathscr{S}(\R^n)$ denote the Schwartz space of rapidly decaying smooth test functions on $\R^n$, and  $\mathscr{S}^*(\R^n)$ the dual space of tempered distributions (anti-linear continuous functionals on $\mathscr{S}(\R^n)$).
For $u\in \mathscr{S}(\R^n)$ we define the Fourier transform 
$\hat{u}(\bxi):= \frac{1}{(2\pi)^{n/2}}\int_{\R^n}\re^{-\ri \bxi\cdot \bx}u(\bx)\,\rd \bx , \;\; \bxi\in\R^n$, 
extending this definition to $\scrS^*(\R^n)$ in the usual way.
We define the Sobolev space $H^s(\R^n)$
by
\begin{align*}
H^s(\R^n):
= \big\{u\in \mathscr{S}^*(\R^n): \|u\|_{H^s\Rn}<\infty\big\},
\quad\text{where}\quad
\norm{u}{H^{s}(\R^n)}^2 = \int_{\R^n}(1+|\bxi|^2)^{s}|\hat{u}(\bxi)|^2\,\rd \bxi,
\end{align*}
which is a Hilbert space with the inner product
$\left(u,v\right)_{H^{s}(\R^n)} = \int_{\R^n}(1+|\bxi|^2)^{s}\,\hat{u}(\bxi)\overline{\hat{v}(\bxi)}\,\rd \bxi$.
For any $-\infty<s<t<\infty$, $H^t(\R^n)$ is continuously embedded in $H^s(\R^n)$ with dense image
and $\|u\|_{H^s(\R^n)}<\|u\|_{H^t(\R^n)}$ for all $0\ne u\in H^t(\R^n)$.
Recalling that for a multi-index $\boldsymbol\alpha\in\N_0^n$ we have ${\cal F}(\partial^{\boldsymbol\alpha}u/\partial \bx^{\boldsymbol\alpha})(\bxi)=(-\ri\bxi)^{\boldsymbol\alpha}\hat u(\bxi)$
and $|\boldsymbol\alpha|:=\sum_{j=1}^n\alpha_j$,
by Plancherel's theorem it holds  that
\begin{align}\label{eq:IntegralNorm}
(u,v)_{H^m\Rn}
=\sum_{\substack{\boldsymbol\alpha\in\N_0^n,\\ |\boldsymbol\alpha|\le m}}
\binom{m}{|\boldsymbol\alpha|}\binom{|\boldsymbol\alpha|}{\boldsymbol\alpha}
\int_{\R^n}\frac{\partial^{|\boldsymbol\alpha|} u}{\partial \mathbf x^{\boldsymbol\alpha}}(\bx)
\frac{\partial^{|\boldsymbol\alpha|} \overline v}{\partial \mathbf x^{\boldsymbol\alpha}}(\bx)
\rd \bx,\qquad m\in\N_0.
\end{align}
Hence functions with disjoint support are orthogonal in $H^m\Rn$ for $m\in\N_0$. 
But we emphasize that this is not in general true in $H^s \Rn$ for $s\in \R\setminus\N_0$.

For a closed $F\subset\R^n$, we define $H_F^s :=\big\{u\in H^s(\R^n): \supp(u) \subset F\big\}$. 
The question of whether a given set $E\subset\R^n$ can support nontrivial elements of $H^s\Rn$ will be important in what follows. 
This question was investigated in detail in \cite{HewMoi:15}, where we introduced the concept of $s$-nullity. (This concept is referred to by some authors as $(2,-s)$-polarity, see e.g.\ \cite[\S13.2]{Maz'ya}.)
\begin{defn}
For $s\in\R$ we say that a set $E\subset\R^n$ is $s$-null if there are no non-zero elements of $H^{s}(\R^n)$ supported entirely inside $E$ (equivalently, if $H^{s}_{F}=\{0\}$ for every closed set $F\subset E$).
\end{defn}

There are many different ways to define Sobolev spaces on an open subset $\Omega\subset \R^n$. In \cite{ChaHewMoi:13} we studied the following three spaces, all of which are closed subspaces of $H^s(\R^n)$, hence Hilbert spaces with respect to the inner product inherited from $H^s(\R^n)$:
\begin{align*}
 H_{\overline{\Omega}}^s &:= \big\{u\in H^s(\R^n): \supp(u) \subset \overline{\Omega}\big\},
 &&s\in\R,\\
\tH^s(\Omega)&:=\overline{\scrD(\Omega)}^{H^s(\R^n)},&&s\in\R,\\
\cirHs(\Omega) &:= \big\{u\in H^s(\R^n): u= 0 \mbox{ a.e. in } \Omega^c\big\}
= \big\{u\in H^s(\R^n): \meas\big(\supp{u}\cap(\Omega^c)\big) = 0\big\}, && s\ge0;
\end{align*}
here $m(\cdot)$ denotes Lebesgue measure on $\R^n$.
%
%
(We note that $\cirHs(\Omega)$ can also be identified with the set of functions defined on $\Omega$ which can be extended by zero to produce functions of the same Sobolev regularity on the whole of $\R^n$, see Remark \ref{rem:Hsze}.)
These three spaces satisfy the inclusions 
\begin{align*}
\tH^s(\Omega)\subset \cirHs(\Omega)\subset H^s_{\overline\Omega}
\end{align*}
(with $\cirHs(\Omega)$ present only for $s\geq0$). 
If $\Omega$ is sufficiently smooth (e.g.\ $C^0$) then the three sets coincide, but in general all three can be different (this issue is studied in \cite[\S3.5]{ChaHewMoi:13}).



Another way to define Sobolev spaces on an open set $\Omega$ is by restriction from $H^s(\R^n)$. For $s\in\R$ let
$$
H^s(\Omega):=\big\{u\in \scrD^*(\Omega): u=U|_\Omega \textrm{ for some }U\in H^s(\R^n)\big\},
\quad
\|u\|_{H^{s}(\Omega)}:= \min_{\substack{U\in H^s(\R^n)\\ U|_{\Omega}=u}}\normt{U}{H^{s}(\R^n)},
$$
where $U|_\Omega$ denotes the restriction of the distribution $U$ to $\Omega$ in the standard sense (cf.~\cite[p.~66]{McLean}). 
The inner product on $H^s\OO$ can be written as
$(u,v)_{H^s(\Omega)} : = (Q_sU,Q_sV)_{H^s(\R^n)}$,
for $u,v\in H^s(\R^n)$, where $U,V\in H^s(\R^n)$ are such that $U|_\Omega = u$, $V|_\Omega = v$ and
$Q_s$ is the orthogonal projection $Q_s:H^s\Rn\to(H^s_{\Omega^c})^\perp$, see \cite[\S3.1.4]{ChaHewMoi:13}.
It follows that the restriction operator
\begin{equation}\label{eq:RestrIsUnitary}
|_\Omega :(H^s_{\Omega^c})^\perp\to H^s(\Omega) \qquad \text{is a unitary isomorphism.}
\end{equation}
%
We also introduce the closed subspace of $H^s(\Omega)$ defined by
\begin{equation*}
H^s_0(\Omega):=\overline{\scrD(\Omega)\big|_\Omega}^{H^s(\Omega)}.
\end{equation*}
The question of when $H^s\OO$ and $H^s_0\OO$ are equal is studied in detail in \cite[\S3.6]{ChaHewMoi:13}.

For any open $\Omega\subset\R^n$, closed $F\subset\R^n$ and $s\in\R$,
the following dual space realisations hold, in the sense of unitary isomorphism (see \cite[\S 3.2]{ChaHewMoi:13}):
\begin{align}
\label{eq:Duality}
\big(H^s\OO\big)^*&=\tH^{-s}\OO,&
\quad
\big(\tH^s\OO\big)^*&=H^{-s}\OO,
\\
(H^s_F)^*&=\big(\tH^{-s}(F^c)\big)^\bot,&
\quad
\big(H^s_0\OO\big)^*&=
(\tH^{-s}(\Omega) \cap H^{-s}_{\partial\Omega})^{\perp, \tH^{-s}(\Omega)}.
\nonumber
\end{align}
The duality pairings corresponding to these realisations are defined in terms of the duality pairing $\langle u,v\rangle_{H^s\Rn\times H^{-s}\Rn}
=\int_{\R^n} \hat u(\bxi)\overline{\hat v(\bxi)}\rd\bxi$, which extends the $L^2\Rn$ scalar product.


\section{Properties of the restriction operator \texorpdfstring{$|_\Omega: H^s(\R^n)\to H^s(\Omega)$}{}}
\label{subsec:restriction}


In this section we examine the relationship between the spaces $\tH^s(\Omega),\cirHs(\Omega),H^s_{\overline\Omega}\subset H^s(\R^n)$, whose elements are distributions on $\R^n$, and the spaces $H^s(\Omega)$ and $H^s_0(\Omega)$, whose elements are distributions on $\Omega$. The two types of space are linked by the restriction operator $|_\Omega:H^s(\R^n)\to H^s(\Omega)$, 
and in this section we investigate some of its properties. 
In particular we ask: for a given value of $s$ and an appropriate pair of subspaces $X\subset H^s(\R^n)$, $Y\subset H^s(\Omega)$, when is $|_\Omega:X\to Y $ (i) injective? (ii) surjective? (iii) a unitary isomorphism?

We start by recalling that
$|_\Omega:X\to H^s(\Omega)$ is continuous with norm at most one, for any subspace $X\subset H^s(\R^n)$, 
and that $|_\Omega:(H^s_{\Omega^c})^\perp\to H^s(\Omega)$ is a unitary isomorphism.

For Lipschitz $\Omega$ with bounded $\deO$ we have the following result, which states that $|_\Omega:\tH^s(\Omega)\to H^s_0(\Omega)$ is an isomorphism for certain values of $s$.
(As in \cite{ChaHewMoi:13,McLean}, we say that $\Omega$ is Lipschitz if its boundary can be locally represented as the graph, suitably rotated, of a Lipschitz function from $\R^{n-1}$ to $\R$, with $\Omega$ lying only on one side of $\partial\Omega$.)
The result for $s\geq0$ is classical (see e.g.\ \cite[Theorem 3.33]{McLean}); the extension to $-1/2<s<0$ is proved below (it is an immediate consequence of Lemma~\ref{lem:restriction}\rf{a5} and 
\cite[Corollary~3.29(ix)]{ChaHewMoi:13}).
In interpreting this result one should recall that for Lipschitz $\Omega$ 
it holds that $H^s_0(\Omega)=H^s(\Omega)$ if and only if $s\leq 1/2$ \cite[Corollary~3.29(ix)]{ChaHewMoi:13}
and also that $\tH^s(\Omega)=\cirHs(\Omega)=H^s_{\overline{\Omega}}$ for all $s\in\R$ 
(see \cite[Lemma 3.15]{ChaHewMoi:13}, which follows from \cite[Theorems 3.29 and 3.21]{McLean}),
with $\cirHs(\Omega)$ present only for $s\geq 0$.
\begin{lem}
\label{lem:restrictionLipschitz}
If $\Omega$ is Lipschitz, $\deO$ is bounded, and $s> -1/2$, $s\not\in \{1/2,3/2,...\}$, then $|_\Omega:\tH^s(\Omega) \to H^s_0(\Omega)$ is an isomorphism (with norm at most one).
\end{lem}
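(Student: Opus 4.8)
The plan is to split the range of $s$ into $s\ge 0$ and $-1/2<s<0$. For $s\ge0$ the assertion is the classical identification, under restriction, of $\tH^s(\Omega)$ (which coincides with $H^s_{\overline{\Omega}}$ here, since $\Omega$ is Lipschitz) with $H^s_0(\Omega)$, and I would simply invoke \cite[Theorem~3.33]{McLean}. So the real work lies in the range $-1/2<s<0$. Here $s<1/2$, so $H^s_0(\Omega)=H^s(\Omega)$ by \cite[Corollary~3.29(ix)]{ChaHewMoi:13}, and the claim reduces to showing that $|_\Omega:\tH^s(\Omega)\to H^s(\Omega)$ is an isomorphism with norm at most one.

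The norm bound is just the general continuity of restriction recalled above. For injectivity, observe that the kernel of $|_\Omega$ on $\tH^s(\Omega)=H^s_{\overline{\Omega}}$ is $H^s_{\overline{\Omega}}\cap H^s_{\Omega^c}=H^s_{\deO}$, since a distribution supported in both $\overline{\Omega}$ and $\Omega^c$ is supported in $\deO$. As $\deO$ is an $(n-1)$-dimensional Lipschitz surface, it is $s$-null for every $s>-1/2$ (cf.\ \cite{HewMoi:15}), so $H^s_{\deO}=\{0\}$ and $|_\Omega$ is injective on $\tH^s(\Omega)$.

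The heart of the matter is surjectivity onto $H^s(\Omega)$. I would exploit the unitary isomorphism $U:=|_\Omega:(H^s_{\Omega^c})^\perp\to H^s(\Omega)$ of \eqref{eq:RestrIsUnitary}. Writing $Q_s$ for the orthogonal projection of $H^s\Rn$ onto $(H^s_{\Omega^c})^\perp$, every $u\in\tH^s(\Omega)$ satisfies $u-Q_su\in H^s_{\Omega^c}$, hence $u|_\Omega=(Q_su)|_\Omega$ and $|_\Omega|_{\tH^s(\Omega)}=U\circ\big(Q_s|_{\tH^s(\Omega)}\big)$. Since $U$ is an isomorphism, it suffices to show that $Q_s$ maps $\tH^s(\Omega)=H^s_{\overline{\Omega}}$ onto $(H^s_{\Omega^c})^\perp$, and a short computation shows this is equivalent to the decomposition $H^s_{\overline{\Omega}}+H^s_{\Omega^c}=H^s\Rn$. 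I would establish this decomposition from the classical fact that, for Lipschitz $\Omega$ and $|s|<1/2$, pointwise multiplication by the characteristic function $\chi_\Omega$ extends to a bounded operator on $H^s\Rn$ (cf.\ \cite{McLean}): then $u=\chi_\Omega u+\chi_{\Omega^c}u$ splits any $u\in H^s\Rn$ into pieces lying in $H^s_{\overline{\Omega}}$ and $H^s_{\Omega^c}$ respectively. Together with injectivity this makes $|_\Omega:\tH^s(\Omega)\to H^s(\Omega)$ a continuous bijection, and the open mapping theorem supplies the bounded inverse; the norm bound has already been noted.

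The main obstacle is precisely the decomposition $H^s_{\overline{\Omega}}+H^s_{\Omega^c}=H^s\Rn$, i.e.\ the $\chi_\Omega$-multiplier property, which is exactly where the Lipschitz regularity of $\deO$ and the restriction $|s|<1/2$ are used: for $s\ge1/2$ the function $\chi_\Omega$ is no longer an $H^s$-multiplier, and indeed $H^s_0(\Omega)\subsetneq H^s(\Omega)$, which is why the regime $s\ge0$ must instead be handled by the classical trace-based argument of \cite{McLean}. A secondary point to verify carefully is the precise $s$-nullity threshold for Lipschitz boundaries, namely that $\deO$ is $s$-null exactly for $s>-1/2$, which is what forces the lower endpoint of the admissible range.
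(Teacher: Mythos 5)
Your proposal is correct in substance, but for the crucial range $-1/2<s<0$ it takes a genuinely different route from the paper. The paper handles this range by duality: its Lemma~\ref{lem:restriction}\rf{a5} shows, via the dual realisations \eqref{eq:Duality}, that the Banach-space adjoint of $|^s_\Omega$ is (up to unitary identifications) $|^{-s}_\Omega$, so bijectivity of $|^{-s}_\Omega:\tH^{-s}(\Omega)\to H^{-s}_0(\Omega)=H^{-s}(\Omega)$ for $0<-s<1/2$ (the classical case plus \cite[Corollary~3.29(ix)]{ChaHewMoi:13}) transfers at once to bijectivity of $|^{s}_\Omega$. You instead argue directly at negative order: injectivity because $\ker|_\Omega\cap H^s_{\overline\Omega}=H^s_{\overline\Omega}\cap H^s_{\Omega^c}=H^s_{\partial\Omega}=\{0\}$ by $s$-nullity of Lipschitz boundaries, and surjectivity by reducing it to the decomposition $H^s_{\overline\Omega}+H^s_{\Omega^c}=H^s\Rn$, which you obtain from the classical fact that $\chi_\Omega$ is a pointwise multiplier on $H^s\Rn$ for Lipschitz $\Omega$ and $|s|<1/2$. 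Your reduction of surjectivity to this decomposition is correct (it can even bypass $Q_s$: if $V\in H^s\Rn$ extends $u\in H^s(\Omega)$ and $V=V_1+V_2$ with $V_1\in H^s_{\overline\Omega}$, $V_2\in H^s_{\Omega^c}$, then $V_1|_\Omega=u$). What the paper's route buys is economy: it uses nothing beyond the dual realisations already established in the companion paper, and the same adjoint identity also yields Lemma~\ref{lem:restriction}\rf{b5}. What your route buys is a concrete picture: an explicit extension of each $u\in H^s(\Omega)$ supported in $\overline\Omega$, namely $\chi_\Omega V$.

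Three caveats. First, the citation: the multiplier theorem in the form you need (in particular for $s<0$) is not, to my knowledge, stated in \cite{McLean}; the right sources are Strichartz's 1967 multiplier paper, Triebel, or Grisvard. Note also that the standard proof of that theorem for $-1/2<s<0$ is itself a duality argument from the range $0\le s<1/2$, so the duality that the paper makes explicit is hidden inside your black box. Second, for $s<0$ the product $\chi_\Omega u$ must be defined by density (from $L^2$, say), and you should check that the support containments $\supp(\chi_\Omega u)\subset\overline\Omega$ and $\supp(\chi_{\Omega^c}u)\subset\Omega^c$ survive the limit; this is routine but needs saying. Third, two side remarks are slightly off: for Lipschitz $\Omega$ one has $H^{1/2}_0(\Omega)=H^{1/2}(\Omega)$ (equality holds at $s=1/2$; it is the multiplier property, not the identity $H^s_0\OO=H^s\OO$, that fails first), and Lipschitz boundaries are $s$-null for $s\ge-1/2$, endpoint included. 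Neither affects your argument, which only needs $s>-1/2$.
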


We would like to understand to what extent this result generalises to non-Lipschitz $\Omega$, and also how $|_\Omega$ acts on the spaces $\cirHs(\Omega)$ and $H^s_{\overline{\Omega}}$ in the case where these are not equal to $\tH^s(\Omega)$. Some partial results in this direction are provided by the following lemma.
\begin{lem} \label{lem:same}
Let $\Omega\subset \R^n$ be open and let $s\in\R$. Then:
\begin{enumerate}[(i)]
\item \label{b4}
$|_\Omega:H^s_{\overline \Omega}\to H^s(\Omega)$ is injective if and only if $\partial\Omega$ is $s$-null.
\item \label{e4}
For $s\geq 0$, $|_\Omega:\cirHs(\Omega) \to H^s(\Omega)$ is injective; if $s\in\N_0$ then it is a unitary isomorphism onto its image in $H^s(\Omega)$.
%
%
%
\item \label{f4}
For $s\geq 0$, $|_\Omega:\tH^s(\Omega)\to H^s_0(\Omega)$ is injective and has dense image; if $s\in\N_0$ then it is a unitary isomorphism onto $H^s_0(\Omega)$.
\end{enumerate}
\end{lem}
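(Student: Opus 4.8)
The unifying observation I would use is that the kernel of $|_\Omega:H^s(\R^n)\to H^s(\Omega)$ is exactly $H^s_{\Omega^c}$: a distribution $U$ restricts to $0$ on $\Omega$ precisely when $\supp(U)\cap\Omega=\emptyset$, i.e.\ $\supp(U)\subset\Omega^c$. Hence for any subspace $X\subset H^s(\R^n)$ the map $|_\Omega:X\to H^s(\Omega)$ is injective iff $X\cap H^s_{\Omega^c}=\{0\}$, and is isometric (hence, by polarisation, unitary onto its image) iff $X\subset (H^s_{\Omega^c})^\perp$; the latter follows from \rf{eq:RestrIsUnitary} together with the identity $\|u\|_{H^s(\Omega)}=\|Q_sU\|_{H^s(\R^n)}$ for any extension $U$ of $u$. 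These two criteria drive all three parts.

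For \rf{b4} I would compute $H^s_{\overline\Omega}\cap H^s_{\Omega^c}$. Elements of this intersection have support in $\overline\Omega\cap\Omega^c=\partial\Omega$, so the intersection equals $H^s_{\partial\Omega}$ and $|_\Omega$ is injective on $H^s_{\overline\Omega}$ iff $H^s_{\partial\Omega}=\{0\}$. Since $\partial\Omega$ is closed and $H^s_F\subset H^s_{\partial\Omega}$ for every closed $F\subset\partial\Omega$, this is exactly the statement that $\partial\Omega$ is $s$-null.

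For \rf{e4}, injectivity is immediate: any $u\in\cirHs(\Omega)\cap H^s_{\Omega^c}$ is (as $s\ge0$) an $L^2$ function vanishing a.e.\ in $\Omega^c$ and, being supported in $\Omega^c$, also vanishing a.e.\ in $\Omega$, whence $u=0$. For $s=m\in\N_0$ I would prove the isometry by showing $\cirHm(\Omega)\subset(H^m_{\Omega^c})^\perp$. Using the integral form \rf{eq:IntegralNorm} of the inner product, it suffices that each term $\int_{\R^n}\partial^{\boldsymbol\alpha}u\,\overline{\partial^{\boldsymbol\alpha}w}\,\rd\bx$ vanishes for $u\in\cirHm(\Omega)$, $w\in H^m_{\Omega^c}$, $|\boldsymbol\alpha|\le m$. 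Since $\partial^{\boldsymbol\alpha}w$ is supported in $\Omega^c$, it is enough that $\partial^{\boldsymbol\alpha}u=0$ a.e.\ in $\Omega^c$, which I would obtain by induction on $|\boldsymbol\alpha|$ from the classical fact that the weak gradient of a Sobolev function vanishes a.e.\ on any set where the function itself vanishes a.e.: the base case is the defining property $u=0$ a.e.\ in $\Omega^c$, and if $\partial^{\boldsymbol\beta}u\in H^1$ vanishes a.e.\ in $\Omega^c$ then so does $\nabla\partial^{\boldsymbol\beta}u$. This level-set argument is the crux of the lemma and the step I would treat most carefully; the rest is bookkeeping. Note the isometry genuinely requires $m\in\N_0$, since \rf{eq:IntegralNorm}, and with it the orthogonality of disjointly supported functions, fails for non-integer $s$.

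Finally, for \rf{f4}, injectivity is inherited from \rf{e4} via $\tH^s(\Omega)\subset\cirHs(\Omega)$. For the dense image, $|_\Omega$ maps $\scrD(\Omega)$ onto $\scrD(\Omega)|_\Omega$, which is by definition dense in $H^s_0(\Omega)$; continuity of $|_\Omega$ and closedness of $H^s_0(\Omega)$ give $|_\Omega(\tH^s(\Omega))\subset H^s_0(\Omega)$, so the image is dense. When $s=m\in\N_0$ the isometry of \rf{e4} restricts to $\tH^m(\Omega)$; an isometry of a complete space has closed image, and a closed dense subspace of $H^m_0(\Omega)$ is all of it, giving the unitary isomorphism onto $H^m_0(\Omega)$.
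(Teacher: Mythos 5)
Your proof is correct, and its overall skeleton coincides with the paper's: part \rf{b4} via the identification of the kernel of $|_\Omega$ on $H^s\Rn$ with $H^s_{\Omega^c}$ and the computation $\overline\Omega\cap\Omega^c=\partial\Omega$; part \rf{e4} by showing $\cirHm(\Omega)\subset (H^m_{\Omega^c})^\perp$ for $m\in\N_0$ and then invoking \eqref{eq:RestrIsUnitary}; part \rf{f4} from part \rf{e4}, the density of $\scrD(\Omega)|_\Omega$ in $H^s_0(\Omega)$, and the fact that an isometry of a complete space has closed image. The one substantive divergence is how you prove the orthogonality of $\cirHm(\Omega)$ and $H^m_{\Omega^c}$. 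The paper argues via supports: since $\supp(\partial^{\boldsymbol\alpha}u)\subset\supp u$ and $\meas\big(\supp u\cap\Omega^c\big)=0$ for $u\in\cirHm(\Omega)$ (the second characterisation in the definition of $\cirHs(\Omega)$), each integrand in \eqref{eq:IntegralNorm} vanishes a.e., which is elementary distribution theory. You instead work from the first characterisation ($u=0$ a.e.\ in $\Omega^c$) and deduce by induction that all derivatives of order at most $m$ vanish a.e.\ in $\Omega^c$, using the classical level-set theorem that the weak gradient of an $H^1$ function vanishes a.e.\ on any set where the function vanishes a.e. This is a heavier tool than the paper needs, but you apply it correctly (the inductive step only requires $\partial^{\boldsymbol\beta}u\in H^1\Rn$ for $|\boldsymbol\beta|\le m-1$, which holds), and it has the mild advantage of working directly with the a.e.-vanishing definition rather than with supports, so it does not rely on the equivalence of the two characterisations of $\cirHs(\Omega)$. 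Both routes conclude identically, and your observation that the isometry forces $\cirHm(\Omega)\subset(H^m_{\Omega^c})^\perp$ (your ``iff'') is in fact the same orthogonal-decomposition argument the paper uses later in Lemma \ref{lem:normEquality}.
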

\begin{proof}
Part \rf{b4} is obvious from the definition of the restriction operator. 
For part \rf{e4}, the injectivity statement is obvious, since if $u\in \cirHs(\Omega)$ and $u|_\Omega=0$ then $u=0$. 
That $|_\Omega:\cirHs(\Omega) \to H^s(\Omega)$ is a unitary isomorphism onto its image in $H^s(\Omega)$ when $s\in\N_0$ follows because in this case the $H^s(\R^n)$ inner product (see~\eqref{eq:IntegralNorm}) can be written as a sum of integrals over products of functions/derivatives in the ``physical'' space (as opposed to Fourier space), so disjoint support is enough to guarantee orthogonality.
Hence when $s\in\N_0$ we have $\cirHs(\Omega)\subset (H^s_{\Omega^c})^\perp$, and we know by \eqref{eq:RestrIsUnitary} that $|_\Omega$ is a unitary isomorphism from $(H^s_{\Omega^c})^\perp$ onto $H^s(\Omega)$.
Part \rf{f4} follows from part \rf{e4} and the density of $\scrD(\Omega)$ in both $\tH^s(\Omega)$ and $H^s_0(\Omega)$ (since the image of a closed set under an isometry is closed).
\end{proof}

\begin{rem}
By combining Lemma \ref{lem:same}\rf{b4} with the results concerning $s$-nullity in \cite[Lemma~3.10]{ChaHewMoi:13} (see also \cite{HewMoi:15})
one can derive a number of corollaries. 
For example: 
\emph{(i)} For every open $\Omega$ there exists $-n/2\le s_\Omega\le n/2$ such that $|_\Omega:H^s_{\overline \Omega}\to H^s(\Omega)$ is always injective for $s>s_\Omega$ and never injective for $s<s_\Omega$.
In particular, $|_\Omega:H^s_{\overline \Omega}\to H^s(\Omega)$ is always injective for $s>n/2$ and never injective for $s<-n/2$. 
\emph{(ii)} If $\Omega$ is Lipschitz (even with unbounded boundary), then $|_\Omega:\tH^s(\Omega)=H^s_{\overline \Omega}\to H^s(\Omega)$ is injective if and only if $s\ge -1/2$. 
\emph{(iii)} For every $-1/2\leq s_*\leq 0$ there exists a $C^0$ open set $\Omega$ for which $|_\Omega:\tH^s(\Omega)=H^s_{\overline \Omega}\to H^s(\Omega)$ is injective for all $s>s_*$ and not injective for all $s<s_*$. 
\end{rem}

\begin{rem}\label{rem:Hsze}
To expand on Lemma~\ref{lem:same}\rf{e4}, we note that the restriction operator $|_\Omega:\cirHs(\Omega) \to \{u\in H^s(\Omega): \uze\in H^s(\R^n)\}\subset H^s(\Omega)$ is a bijection, where we denote by $\uze$ the extension of $u\in H^s\OO$ from $\Omega$ to $\R^n$ by zero and $u\mapsto \uze$ is the inverse of $|_\Omega$, see also \cite[Remark~3.1]{ChaHewMoi:13}.
\end{rem}

\begin{rem}
Lemma~\ref{lem:same}\rf{f4} and 
\cite[Remark~3.32]{ChaHewMoi:13}, which follows from \cite[Chapter 1, Theorem~11.7]{LiMaI},
imply that if $\Omega$ is $C^\infty$ and bounded, and if $s\in\{1/2,3/2,\ldots\}$, then the restriction $|_\Omega:\tH^s\OO\to H^s_0\OO$ is not surjective, demonstrating the sharpness of Lemma~\ref{lem:restrictionLipschitz}.
%
\end{rem}


\begin{rem}
If $\Omega$ is a Lipschitz open set with bounded boundary, Lemma~\ref{lem:restrictionLipschitz} and the definition of $H^s\OO$ give that 
$|_\Omega:\tH^s\OO\cap H^1\Rn\to H^s_0\OO\cap H^1\OO$ is an isomorphism for all $0\le s\le1$, $s\ne 1/2$.
Then \cite[Theorem~3.40]{McLean} gives that 
\begin{align*}
&|_\Omega:\tH^s\OO\cap H^1\Rn\to H^1\OO && \text{is an isomorphism for }0\le s<1/2, \\
&|_\Omega:\tH^s\OO\cap H^1\Rn\to H^1_0\OO && \text{is an isomorphism for }1/2<s\le1.
\end{align*}
Characterising $(\tH^{1/2}\OO\cap H^1\Rn)|_\Omega$, and deriving similar results for non-Lipschitz sets, appear to be open problems.
\end{rem}

Lemma \ref{lem:same}\rf{e4} and \rf{f4} only deal with the case $s\geq 0$. 
In the next lemma we relate properties of the restriction operator acting on $\tH^s(\Omega)$ for $s$ and $-s$. In particular, this lemma allows us to infer the statement of Lemma \ref{lem:restrictionLipschitz} for $-1/2<s<0$ from the classical statement for $0<s<1/2$ (recalling that $H^s_0(\Omega)=H^s(\Omega)$ for Lipschitz $\Omega$ and $s\leq 1/2$). 
For clarity, in this lemma and its proof we denote the restriction operator acting on $\tH^s(\Omega)$ as 
$|^s_\Omega:\tH^s(\Omega)\to H^s(\Omega)$. 
The proof of the lemma makes use of the fact that we can characterise the Banach space adjoint of ${|^s_\Omega}$ in terms of $|^{-s}_\Omega$, using the dual space realisations  \eqref{eq:Duality}.
\begin{lem}
\label{lem:restriction}
Let $\Omega\subset \R^n$ be non-empty and open, and let $s\in\R$. Then
\begin{enumerate}[(i)]
\item \label{a5}
$|^s_\Omega:\tH^s(\Omega)\to H^s(\Omega)$ is bijective if and only if $|^{-s}_\Omega:\tH^{-s}(\Omega)\to H^{-s}(\Omega)$ is bijective.
\item \label{b5} $|^{-s}_\Omega:\tH^{-s}(\Omega)\to H^{-s}(\Omega)$ is injective if and only if $|^{s}_\Omega:\tH^{s}(\Omega)\to H^{s}(\Omega)$ has dense image; i.e.\ if and only if $H^s_0(\Omega)=H^s(\Omega)$.
\end{enumerate}
\end{lem}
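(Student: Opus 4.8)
The strategy is to exploit the duality between the spaces at levels $s$ and $-s$, so the crucial first step is to identify the Banach-space adjoint of $|^s_\Omega:\tH^s\OO\to H^s\OO$. Using the dual realisations $(H^s\OO)^*=\tH^{-s}\OO$ and $(\tH^s\OO)^*=H^{-s}\OO$ from \eqref{eq:Duality}, the adjoint is a map $(|^s_\Omega)^*:\tH^{-s}\OO\to H^{-s}\OO$, and I claim it is exactly $|^{-s}_\Omega$. To see this, take $u\in\tH^s\OO$ and $w\in\tH^{-s}\OO=(H^s\OO)^*$. Since $u$ is itself an $H^s\Rn$-extension of $|^s_\Omega u$, the defining pairing gives $\langle |^s_\Omega u,w\rangle_{H^s\OO\times\tH^{-s}\OO}=\langle u,w\rangle_{H^s\Rn\times H^{-s}\Rn}$; and since $w$ is an $H^{-s}\Rn$-extension of $|^{-s}_\Omega w$, the pairing for $(\tH^s\OO)^*=H^{-s}\OO$ gives $\langle u,|^{-s}_\Omega w\rangle_{\tH^s\OO\times H^{-s}\OO}=\langle u,w\rangle_{H^s\Rn\times H^{-s}\Rn}$ as well. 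Hence $\langle |^s_\Omega u,w\rangle=\langle u,|^{-s}_\Omega w\rangle$ for all admissible $u,w$, which is precisely the statement $(|^s_\Omega)^*=|^{-s}_\Omega$.

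Granted this identification, both parts reduce to textbook duality facts for bounded operators between Hilbert (hence reflexive) spaces, recalling that $|^s_\Omega$ is continuous with norm at most one. For part \rf{a5}, a bounded operator is a topological isomorphism if and only if its adjoint is; the only subtlety is to upgrade the word ``bijective'' to ``isomorphism'', which is immediate from the open mapping theorem once we know $|^s_\Omega$ is bounded. Applying this with $T=|^s_\Omega$ and $T^*=|^{-s}_\Omega$ gives \rf{a5} directly, and the symmetry $s\leftrightarrow-s$ confirms the equivalence runs both ways.

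For part \rf{b5}, I would invoke the annihilator identity $\ker T^*=(\operatorname{ran}T)^\perp$, which says that $T^*$ is injective if and only if $\operatorname{ran}T$ is dense. With $T=|^s_\Omega$ and $T^*=|^{-s}_\Omega$ this is exactly the first claimed equivalence. It then remains to identify the closure of $\operatorname{ran}|^s_\Omega$ in $H^s\OO$ with $H^s_0\OO$: since $\scrD(\Omega)\subset\tH^s\OO$ is dense and $|^s_\Omega$ is continuous, one has $\overline{\operatorname{ran}|^s_\Omega}=\overline{|^s_\Omega(\scrD(\Omega))}=\overline{\scrD(\Omega)|_\Omega}=H^s_0\OO$, so density of the range is equivalent to $H^s_0\OO=H^s\OO$.

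The main obstacle is the adjoint identification in the first paragraph, and in particular the bookkeeping forced by Kato's convention of anti-linear duals. One has to check that the two duality pairings in \eqref{eq:Duality} are both restrictions of the same $H^s\Rn\times H^{-s}\Rn$ pairing, that they are well defined (independent of the choice of extension, which uses that elements of $\tH^{\pm s}\OO$ annihilate $H^{\mp s}_{\Omega^c}$ because a test function supported in the open set $\Omega$ has support disjoint from the closed set $\Omega^c$), and that the resulting adjoint is genuinely $|^{-s}_\Omega$ rather than a conjugated or sign-twisted variant. Once this is pinned down, the functional-analytic deductions are routine.
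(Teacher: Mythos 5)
Your proposal is correct and takes essentially the same approach as the paper: both identify the Banach-space adjoint of $|^s_\Omega$ with $|^{-s}_\Omega$ via the dual realisations \eqref{eq:Duality} (checked by exactly the same pairing computation, using that $u$ and $w$ are themselves extensions of their restrictions), and then conclude by the classical duality results (adjoint bijective iff operator bijective; adjoint injective iff range dense). Your additional remarks --- the open-mapping upgrade from ``bijective'' to ``isomorphism'', and the identification $\overline{\mathrm{ran}\,|^s_\Omega}=H^s_0(\Omega)$ --- are minor details the paper leaves implicit.
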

\begin{proof}
Let $\cI_s:H^{-s}(\Omega)\to(\tH^s(\Omega))^*$ and $\cI_{s}^*:\tH^{s}(\Omega)\to(H^{-s}(\Omega))^*$ be the unitary isomorphisms defined in \cite[eq.~(21)]{ChaHewMoi:13}:
\begin{align*}
\cI_s u (v)= \langle U,v \rangle_{s}
\;\; \mbox{ and } \;\;
\cI_s^*v(u) = \langle v,U \rangle_{-s}, 
\quad \mbox{ for } u\in H^{-s}(\Omega), \,v\in\tH^s(\Omega),
\end{align*}
where $U\in H^{-s}(\R^n)$ denotes \textit{any} extension of $u$ with $U|_\Omega=u$.
Let ${|^s_\Omega}^*:(H^s(\Omega))^*\to(\tH^s(\Omega))^*$ denote the Banach space adjoint (i.e.\ the transpose) of $|^s_\Omega$, defined by $({|^s_\Omega}^* \;l)(\phi) =  l(\phi |^s_\Omega)$ for $l\in (H^s(\Omega))^*$ and $\phi\in \tH^s(\Omega)$. 
We can characterise ${|^s_\Omega}^*$ in terms of $|^{-s}_\Omega$, using $\cI_s$ and $\cI_{-s}^*$. Precisely, it holds that
${|^s_\Omega}^*\; \cI_{-s}^* = \cI_s |^{-s}_\Omega$.
To see this, simply note that, by the definition of $\cI_{s}$ and $\cI_{-s}^*$,
\begin{align*}
({|^s_\Omega}^* \; \cI_{-s}^* u)(v) = (\cI_{-s}^* u)(v|^s_\Omega) = \langle u,v \rangle_s = \big(\cI_s (u|^{-s}_\Omega)\big)(v),
\quad 
u\in\tH^{-s}(\Omega),\;v\in\tH^{s}(\Omega).
\end{align*}
From this characterisation the statements of the lemma follow immediately using classical functional-analytic results, e.g.\ \cite[Corollary 2.18 and Theorem 2.20]{Brezis}.
\end{proof}

We have seen that the restriction operator $|_\Omega:\tH^s\OO\to H^s_0\OO$ is an isomorphism for Lipschitz $\Omega$ and for $s> -1/2$, $s\not\in \{1/2,3/2,...\}$. 
The next proposition shows that this result also extends to the case where $\Omega$ is a finite union of disjoint Lipschitz open sets, even when the union is not itself Lipschitz. Note that we do not require the closures of the constituent open sets to be mutually disjoint. 
The result therefore applies, for example, to the prefractal sets generating the Sierpinski triangle 
\cite[Figure~4(a)]{ChaHewMoi:13},
which are finite unions of equilateral triangles touching at vertices.
\begin{prop}
\label{prop:restrictionUnionLipschitz}
The statements of Lemma \ref{lem:restrictionLipschitz} extend to finite disjoint unions of Lipschitz open sets with bounded boundaries. 
\end{prop}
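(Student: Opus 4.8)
The plan is to reduce the union case to the single-component case treated in Lemma~\ref{lem:restrictionLipschitz}, exploiting a direct-sum decomposition of the relevant Sobolev spaces. Write $\Omega=\bigcup_{j=1}^N\Omega_j$ with each $\Omega_j$ Lipschitz with bounded boundary and the $\Omega_j$ pairwise disjoint (though their closures may touch). Since $s>-1/2$ and each $\Omega_j$ is Lipschitz, Remark~\ref{rem:Hsze}'s surrounding discussion gives $\tH^s(\Omega_j)=\cirHs(\Omega_j)=H^s_{\overline{\Omega_j}}$. The central structural claim I would first establish is that
\begin{align}\label{eq:decomp}
\tH^s(\Omega)=\bigoplus_{j=1}^N \tH^s(\Omega_j),
\end{align}
meaning every $u\in\tH^s(\Omega)$ decomposes uniquely as $u=\sum_j u_j$ with $u_j\in\tH^s(\Omega_j)$. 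The inclusion $\supseteq$ is clear. For $\subseteq$, the key point is that the pairwise intersections $\overline{\Omega_i}\cap\overline{\Omega_j}$ are contained in the boundaries and, being lower-dimensional (e.g.\ finite sets of vertices in the Sierpinski example), are $s$-null for the range of $s$ under consideration; I would invoke the $s$-nullity results of \cite[Lemma~3.10]{ChaHewMoi:13} to argue that these overlaps cannot support nonzero elements of $H^s(\R^n)$, so the decomposition is forced and unique.

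Next I would transport this decomposition to the target side. Because the $\Omega_j$ are genuinely disjoint open sets, a distribution on $\Omega$ is nothing more than an $N$-tuple of distributions on the $\Omega_j$, and one checks from the definition of the restriction norm that
\begin{align}\label{eq:targetdecomp}
H^s_0(\Omega)=\bigoplus_{j=1}^N H^s_0(\Omega_j).
\end{align}
Here the decomposition of $H^s_0$ follows from that of $\tH^s$ by restriction, using density of $\scrD(\Omega)=\bigoplus_j\scrD(\Omega_j)$ and the fact (from Lemma~\ref{lem:same}\rf{f4}) that the restriction has dense image. With \eqref{eq:decomp} and \eqref{eq:targetdecomp} in hand, the restriction operator $|_\Omega$ respects the direct-sum structure: it acts as $|_{\Omega_j}$ on the $j$th summand, since for $u_j\in\tH^s(\Omega_j)$ we have $\supp u_j\subset\overline{\Omega_j}$, and $u_j$ vanishes on every $\Omega_i$ with $i\ne j$. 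Thus $|_\Omega=\bigoplus_j |_{\Omega_j}$, and since each $|_{\Omega_j}:\tH^s(\Omega_j)\to H^s_0(\Omega_j)$ is an isomorphism by Lemma~\ref{lem:restrictionLipschitz}, so is the direct sum.

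\textbf{The main obstacle} I anticipate is justifying that the sum in \eqref{eq:decomp} is \emph{direct} and \emph{orthogonal-enough} to control norms — specifically, ensuring that the component norms $\|u_j\|_{H^s(\R^n)}$ are genuinely controlled by $\|u\|_{H^s(\R^n)}$ uniformly, so that the decomposition operators are bounded. For $s\in\N_0$ the integral representation \eqref{eq:IntegralNorm} gives literal orthogonality of disjointly-supported pieces, but for non-integer $s$ this fails, as the excerpt explicitly warns after \eqref{eq:IntegralNorm}. The resolution is that I do not need the decomposition to be isometric, only that each projection $u\mapsto u_j$ be a bounded map $\tH^s(\Omega)\to\tH^s(\Omega_j)$; this is where the $s$-nullity of the overlaps does the real work, guaranteeing the splitting is well-defined and that the finitely many bounded projections assemble into a Banach-space isomorphism of the direct sums. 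A secondary subtlety is handling the excluded exponents $s\in\{1/2,3/2,\ldots\}$ and the threshold $s=-1/2$ consistently across all components, but since the hypotheses of Lemma~\ref{lem:restrictionLipschitz} are imposed uniformly on each $\Omega_j$, this causes no difficulty.
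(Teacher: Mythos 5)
Your high-level strategy (reduce to the Lipschitz components) is also the paper's, but your route through the direct-sum decomposition $\tH^s(\Omega)=\bigoplus_{j}\tH^s(\Omega_j)$ has a genuine gap at its central step. The inclusion $\sum_j\tH^s(\Omega_j)\subset\tH^s(\Omega)$ is indeed clear; the problem is the converse. The only support you offer for it is the $s$-nullity of the overlaps $\overline{\Omega_i}\cap\overline{\Omega_j}$, but $s$-nullity yields only \emph{uniqueness} of a decomposition $u=\sum_j u_j$ (if $\sum_j u_j=0$ with $u_j\in\tH^s(\Omega_j)$, then each $u_j$ is supported in an $s$-null set, hence zero); it says nothing about \emph{existence}. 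Existence is exactly the question of whether the sum of the closed subspaces $\tH^s(\Omega_j)$ is closed in $H^s\Rn$: $\tH^s(\Omega)$ is the closure of $\scrD(\Omega)=\sum_j\scrD(\Omega_j)$, and a sum of closed subspaces of a Hilbert space need not be closed even when it is algebraically direct (two closed subspaces with trivial intersection can meet at ``zero angle''). Your closing paragraph conflates this with boundedness of the component projections, which is not the real issue: once the algebraic decomposition exists and is direct, boundedness of the projections is automatic from the open mapping theorem applied to the addition map $\tH^s(\Omega_1)\times\cdots\times\tH^s(\Omega_N)\to\tH^s(\Omega)$. What is missing is existence, and no amount of $s$-nullity of overlaps supplies it. The same defect recurs on the target side: your claim $H^s_0(\Omega)=\bigoplus_j H^s_0(\Omega_j)$ needs, for the ``glue the pieces'' direction, extensions of each $u_j\in H^s_0(\Omega_j)$ that are supported in $\overline{\Omega_j}$ (so that they do not pollute the other components), i.e.\ it already needs the surjectivity of $|_{\Omega_j}:\tH^s(\Omega_j)\to H^s_0(\Omega_j)$ from Lemma \ref{lem:restrictionLipschitz}; and your appeal to Lemma \ref{lem:same}\rf{f4} is restricted to $s\ge0$, so it cannot cover $-1/2<s<0$.

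The way to fill the gap is, in effect, the paper's proof, which never decomposes a general element of $\tH^s(\Omega)$ at all. Given $u\in H^s_0(\Omega)$, restrict to each component: $u|_{\Omega_j}\in H^s_0(\Omega_j)$, because every $\phi\in\scrD(\Omega)$ splits (the $\Omega_j$ being disjoint and open) as $\sum_j\phi_j$ with $\phi_j\in\scrD(\Omega_j)$; then use surjectivity of $|_{\Omega_j}$ on each component to produce $w_j\in\tH^s(\Omega_j)\subset\tH^s(\Omega)$ with $w_j|_{\Omega_j}=u|_{\Omega_j}$, and check that $w:=\sum_j w_j$ restricts to $u$ (the cross terms vanish because $\overline{\Omega_j}\cap\Omega_i=\emptyset$ for $i\ne j$). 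This is the paper's Lemma \ref{lem:Iso}, giving surjectivity of $|_\Omega:\tH^s(\Omega)\to H^s_0(\Omega)$ directly; injectivity comes from the $s$-nullity of $\partial\Omega$ (a finite union of Lipschitz boundaries, hence $s$-null for $s\ge-1/2$) via Lemma \ref{lem:same}\rf{b4}, and the isomorphism property then follows from the bounded inverse theorem. Building one preimage per element is strictly easier than establishing your direct-sum decompositions; indeed your decompositions are \emph{consequences} of the proposition (apply $|_{\Omega_j}^{-1}$ componentwise and use $s$-nullity of $\partial\Omega$ to identify the sum with $u$), not a usable stepping stone toward it.
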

\begin{proof}
The injectivity statement follows from the $s$-nullity of finite unions of Lipschitz boundaries for $s\ge-1/2$ (cf.\ \cite[Lemma~3.10 (v) and (xix)]{ChaHewMoi:13}).
Surjectivity follows from Lemma \ref{lem:Iso} below.
\end{proof}
\begin{lem}
\label{lem:Iso}
Let the open set $\Omega\subset\R^n$ be the union of the disjoint open sets $\{A_j\}_{j=1}^N$, $N\in\N$, and suppose that the restrictions $|_{A_j}:\tH^s(A_j)\to H^s_0(A_j)$ are surjective for all $1\le j\le N$. 
Then also $|_\Omega:\tH^s\OO\to H^s_0\OO$ is surjective.
\end{lem}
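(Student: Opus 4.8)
The plan is to reduce the global surjectivity statement to the given per-piece surjectivity by decomposing an arbitrary target $u\in H^s_0\OO$ across the disjoint pieces $A_j$, solving on each piece, and reassembling. First I would record two elementary facts that drive the argument. Since the $A_j$ are pairwise disjoint open sets, $A_j\cap\overline{A_k}=\emptyset$ whenever $j\ne k$ (as $A_j$ is open and contained in the closed set $A_k^c$, hence in its interior $(\overline{A_k})^c$); consequently any element of $\tH^s(A_k)\subset H^s_{\overline{A_k}}$ has support disjoint from $A_j$ for $k\ne j$, and so restricts to zero on $A_j$. Moreover, since $\scrD(A_j)\subset\scrD\OO$ we have $\tH^s(A_j)\subset\tH^s\OO$, so any finite sum of elements drawn from the individual pieces again lies in $\tH^s\OO$.

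The key step, which I expect to be the main obstacle, is to show that the restriction $|_{A_j}$ maps $H^s_0\OO$ into $H^s_0(A_j)$. Here I would use a clean splitting of test functions: given $\psi\in\scrD\OO$, its compact support $K=\supp\psi$ meets each piece in a set $K\cap A_j=K\setminus\bigcup_{k\ne j}A_k$ that is simultaneously open and closed in $K$, hence compact and at positive distance from $\partial A_j$. Therefore $\psi_j:=\psi|_{A_j}$ extended by zero lies in $\scrD(A_j)$, and $\psi=\sum_{j=1}^N\psi_j$. Since $\psi_k|_{A_j}=0$ for $k\ne j$, this gives $\psi|_{A_j}=\psi_j|_{A_j}\in\scrD(A_j)|_{A_j}$. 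Because $|_{A_j}:H^s\OO\to H^s(A_j)$ is continuous with norm at most one (any extension of $u\in H^s\OO$ is, by transitivity of restriction, also an extension of $u|_{A_j}$, so the minimising extension is a competitor in the definition of $\|u|_{A_j}\|_{H^s(A_j)}$), passing to closures in the respective restriction norms yields $|_{A_j}\big(H^s_0\OO\big)\subset H^s_0(A_j)$.

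With these ingredients the proof concludes quickly. Given $u\in H^s_0\OO$, set $u_j:=u|_{A_j}\in H^s_0(A_j)$; by the assumed surjectivity of $|_{A_j}:\tH^s(A_j)\to H^s_0(A_j)$ choose $\phi_j\in\tH^s(A_j)$ with $\phi_j|_{A_j}=u_j$, and put $\phi:=\sum_{j=1}^N\phi_j\in\tH^s\OO$. For each $j$, the support fact from the first paragraph gives $\phi|_{A_j}=\phi_j|_{A_j}=u_j=u|_{A_j}$, and since $\{A_j\}$ is an open cover of $\Omega$ the sheaf property of distributions yields $\phi|_\Omega=u$. Hence $|_\Omega:\tH^s\OO\to H^s_0\OO$ is surjective. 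I expect the only genuine care to be needed in the localization step, namely in verifying that restriction respects the $H^s_0$ closures; the disjoint-support bookkeeping and the final reassembly are then routine.
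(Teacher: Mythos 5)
Your proof is correct and follows essentially the same route as the paper's: restrict $u\in H^s_0\OO$ to each $A_j$, check this lands in $H^s_0(A_j)$ via the decomposition $\psi=\sum_j\psi_j$ of test functions (which is exactly the fact the paper invokes), solve on each piece by the assumed surjectivity, and sum the solutions, using that elements of $\tH^s(A_k)$ are supported in $\overline{A_k}$, which is disjoint from $A_j$ for $k\ne j$. The only cosmetic difference is that you spell out the clopen-support argument for the test-function splitting and phrase the final reassembly via the sheaf property of distributions, where the paper verifies $w|_\Omega=u$ directly against decomposed test functions; these are the same argument.
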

\begin{proof}
In this proof we denote by $|_{\Omega_1,\Omega_2}$ the restriction operator from $\scrD^*(\Omega_1)$ to $\scrD^*(\Omega_2)$, whenever $\Omega_2\subset\Omega_1\subset\R^n$ are open sets. 
Fix $u\in H^s_0\OO$. 
Then, for all $1\le j\le N$, $u|_{\Omega,A_j}\in H^s(A_j)$ belongs to $H^s_0(A_j)$ since $\Omega$ is a disjoint union and so $(\scrD(\Omega)|_{\R^n,\Omega})|_{\Omega,A_j}=\scrD(A_j)|_{\R^n,A_j}$. By assumption, $u|_{\Omega,A_j}=w_j|_{\R^n,A_j}$ for some $w_j\in \tH^s(A_j)\subset \tH^s\OO$.
Finally $w:=\sum_{j=1}^N w_j\in \tH^s\OO$ satisfies $w|_{\R^n,\Omega}=u$
(using the fact that any test function $\phi\in\scrD(\Omega)$ can be uniquely decomposed as a sum $\phi=\sum_{j=1}^N \phi_j$ where $\phi_j\in\scrD(A_j)$), 
and this shows that $u$ is in the range of $|_{\R^n,\Omega}$, as required.
\end{proof}

For $s\ge0$ we can rephrase the results of this section as follows.
For any open set $\Omega$, the restriction $|_\Omega:\tH^s(\Omega)\to H^s_0(\Omega)$ is continuous with norm one, is injective, has dense image, and the zero extension $u\mapsto \uze$ is its right inverse on its image, i.e.\ $\uze|_\Omega=u$ for all $u\in \tH^s\OO|_\Omega$.
Furthermore, for $s\ge0$, the following conditions are equivalent:
\begin{enumerate}[(i)]
\item $|_\Omega:\tH^s(\Omega)\to H^s_0(\Omega)$ is an isomorphism;
\item $|_\Omega:\tH^s(\Omega)\to H^s_0(\Omega)$ is surjective;
\item the zero extension $u\mapsto \uze$ is continuous $H^s_0(\Omega)\to\tH^s(\Omega)$;
\item there exists $c>0$ such that for all $\phi\in\scrD\OO$ and $\Phi\in H^s\Rn$ such that $\Phi|_\Omega=\phi$  we have 
$\|\Phi\|_{H^s\Rn}\ge c\, \|\phi\|_{H^s\Rn}$.
\end{enumerate}
By Proposition \ref{prop:restrictionUnionLipschitz} we know all these conditions hold for disjoint unions of Lipschitz open sets with bounded boundary. 
But results about the surjectivity (or otherwise) of $|_\Omega:\tH^s(\Omega)\to H^s_0(\Omega)$ on more general $\Omega$ do not seem to be available in the literature and in this case we only know (by Lemma \ref{lem:same}\rf{f4}) that the conditions above are true for $s\in\N_0$.
The following therefore appear to be open questions: For which $\Omega$ 
are (i)--(iv) true  
for all $s>-1/2$, $s\not\in \{1/2,3/2,...\}$? 
For which values of $s$ are they satisfied for a given $\Omega$?

\subsection{When is  \texorpdfstring{$|_\Omega:\tH^s(\Omega)\to H^s_0(\Omega)$}{the restriction} a unitary isomorphism?}
\label{subsec:unitary}

To study when  $|_\Omega:\tH^s(\Omega)\to H^s_0(\Omega)$ is a unitary isomorphism, we first note the equivalences in the following lemma. 
We emphasize that the norm on the left-hand side of the equality in part \rf{s2} in the lemma is the minimal one among the $H^s(\R^n)$-norms of all the extensions of $\phi|_\Omega$, while that on the right-hand side uses $\phi=0$ in $\Omega^c$. 
\begin{lem}
\label{lem:normEquality}
Let $\Omega$ be a non-empty open subset of $\R^n$ and let $s\in\R$. The following are equivalent.
\begin{enumerate}[(i)]
\item \label{s1}
$|_\Omega:\tH^s(\Omega)\to H^s_0(\Omega)$ is a unitary isomorphism.
\item \label{s2}
$\big\|\phi|_\Omega\big\|_{H^s(\Omega)} = \|\phi\|_{H^s(\R^d)}$ for all $\phi\in \scrD(\Omega).$
\item \label{s3}
$\scrD(\Omega) \subset (H^s_{\Omega^c})^\perp.$
\end{enumerate}
\end{lem}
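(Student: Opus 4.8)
The plan is to prove the three statements equivalent cyclically, (i)$\Rightarrow$(ii)$\Rightarrow$(iii)$\Rightarrow$(i), leaning throughout on the fact \eqref{eq:RestrIsUnitary} that $|_\Omega:(H^s_{\Omega^c})^\perp\to H^s(\Omega)$ is a unitary isomorphism and on the orthogonal projection $Q_s:H^s\Rn\to(H^s_{\Omega^c})^\perp$. The single computational ingredient I would isolate first is the explicit form of the minimal-norm extension: for $\phi\in\scrD(\Omega)$ the extensions $U\in H^s\Rn$ with $U|_\Omega=\phi|_\Omega$ are exactly the elements of the affine subspace $\phi+H^s_{\Omega^c}$ (since $U|_\Omega=\phi|_\Omega$ is equivalent to $\supp(U-\phi)\subset\Omega^c$), so the one of minimal $H^s\Rn$-norm is the orthogonal projection $Q_s\phi$, whence $\|\phi|_\Omega\|_{H^s(\Omega)}=\|Q_s\phi\|_{H^s\Rn}$.

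For (i)$\Rightarrow$(ii) I would simply note that a unitary isomorphism is in particular an isometry, so for every $\phi\in\scrD(\Omega)\subset\tH^s(\Omega)$ we have $\|\phi\|_{H^s\Rn}=\|\phi|_\Omega\|_{H^s(\Omega)}$, the norm on $H^s_0(\Omega)$ being the one inherited from $H^s(\Omega)$. For (ii)$\Rightarrow$(iii) I would use the orthogonal splitting $\phi=Q_s\phi+(I-Q_s)\phi$ with $(I-Q_s)\phi\in H^s_{\Omega^c}$, so that $\|\phi\|_{H^s\Rn}^2=\|Q_s\phi\|_{H^s\Rn}^2+\|(I-Q_s)\phi\|_{H^s\Rn}^2$; combined with the identity $\|\phi|_\Omega\|_{H^s(\Omega)}=\|Q_s\phi\|_{H^s\Rn}$ from the previous paragraph, hypothesis (ii) forces $\|(I-Q_s)\phi\|_{H^s\Rn}=0$, i.e.\ $\phi=Q_s\phi\in(H^s_{\Omega^c})^\perp$, for every $\phi\in\scrD(\Omega)$, which is precisely (iii).

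The substantive step is (iii)$\Rightarrow$(i). Since $(H^s_{\Omega^c})^\perp$ is closed, (iii) upgrades to $\tH^s(\Omega)=\overline{\scrD(\Omega)}^{H^s\Rn}\subset(H^s_{\Omega^c})^\perp$, so $|_\Omega$ restricted to $\tH^s(\Omega)$ is the restriction of the unitary map \eqref{eq:RestrIsUnitary} to a closed subspace and is therefore an isometry; being an isometry on a complete space, its image $|_\Omega(\tH^s(\Omega))$ is complete, hence closed in $H^s(\Omega)$. I would then observe, using continuity of $|_\Omega$ together with the density of $\scrD(\Omega)$ in $\tH^s(\Omega)$, that $|_\Omega(\tH^s(\Omega))\subset\overline{\scrD(\Omega)|_\Omega}^{H^s(\Omega)}=H^s_0(\Omega)$, while on the other hand $|_\Omega(\tH^s(\Omega))\supset\scrD(\Omega)|_\Omega$, whose closure is $H^s_0(\Omega)$. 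Thus $|_\Omega(\tH^s(\Omega))$ is a closed subspace of $H^s(\Omega)$ that is dense in $H^s_0(\Omega)$, and must therefore equal $H^s_0(\Omega)$. Consequently $|_\Omega:\tH^s(\Omega)\to H^s_0(\Omega)$ is a surjective isometry, i.e.\ a unitary isomorphism, which is (i).

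I expect the only real care to be needed in this last implication, namely in marrying the closedness of the image (from the isometry property) with its density in $H^s_0(\Omega)$ (from the very definition of $H^s_0(\Omega)$) to conclude surjectivity; the remaining implications are essentially the projection identity of the first paragraph read in the two opposite directions, and require no further machinery.
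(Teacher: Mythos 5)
Your proposal is correct and follows essentially the same route as the paper: (i)$\Rightarrow$(ii) is the trivial isometry observation, (ii)$\Rightarrow$(iii) is the orthogonal decomposition $\phi=Q_s\phi+(I-Q_s)\phi$ with Pythagoras and the identity $\|\phi|_\Omega\|_{H^s(\Omega)}=\|Q_s\phi\|_{H^s(\R^n)}$ (the paper writes this with $\phi_1,\phi_2$ in place of $Q_s\phi,(I-Q_s)\phi$), and (iii)$\Rightarrow$(i) uses density of $\scrD(\Omega)$ in $\tH^s(\Omega)$ together with \eqref{eq:RestrIsUnitary}, which the paper dispatches in one line and you merely spell out in more detail. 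As a small point in your favour, your Pythagorean identity correctly carries the squares on the norms, which the paper's proof omits in a harmless typo.
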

\begin{proof}
The implications \rf{s1}$\Rightarrow$\rf{s2} and \rf{s3}$\Rightarrow$\rf{s1} are trivial (the latter holding by the density of $\scrD(\Omega)$ in $\tH^s(\Omega)$ and \rf{eq:RestrIsUnitary}). \rf{s2}$\Rightarrow$\rf{s3} follows because $|_\Omega:(H^s_{\Omega^c})^\perp\to H^s(\Omega)$ is an isometry (cf.\ \eqref{eq:RestrIsUnitary}). Explicitly, if $\phi\in\scrD(\Omega)$ then $\phi=\phi_1+\phi_2$ for a unique pair $\phi_1\in (H^s_{\Omega^c})^\perp$ and $\phi_2\in H^s_{\Omega^c}$. It follows that $\|\phi\|_{H^s(\R^d)}= \|\phi_1\|_{H^s(\R^d)}+ \|\phi_2\|_{H^s(\R^d)}$, and that $\|\phi|_\Omega\|_{H^s(\Omega)}=\|\phi_1|_\Omega\|_{H^s(\Omega)}=\|\phi_1\|_{H^s(\R^d)}$. So if the equality in \rf{s2} holds we must have that $\phi_2=0$, i.e. $\phi\in(H^s_{\Omega^c})^\perp$.
\end{proof}

Lemma \ref{lem:normEquality} allows us to prove the following proposition, which shows that the unitarity property holds whenever the complement of $\Omega$ is negligible (in the sense of $s$-nullity). 
An extreme example is the punctured space $\Omega=\R^n\setminus\{\mathbf0\}$, for which the proposition holds for any $s\geq -n/2$.
\begin{prop}
\label{prop:isometry}
Let $s\in\R$, and let $\Omega$ be an open subset of $\R^n$ such that $\Omega^c$ is $s$-null. Then $|_\Omega:\tH^s(\Omega)\to H^s_0(\Omega)$ is a unitary isomorphism.
\end{prop}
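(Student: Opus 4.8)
The plan is to invoke the equivalences of Lemma~\ref{lem:normEquality}, so that it suffices to verify condition~\rf{s3}, namely that $\scrD(\Omega) \subset (H^s_{\Omega^c})^\perp$. The key observation is that the hypothesis that $\Omega^c$ is $s$-null should force the orthogonal complement to be trivial: indeed, if $\Omega^c$ is $s$-null, then by the definition of $s$-nullity there are no non-zero elements of $H^s(\R^n)$ supported inside $\Omega^c$, so $H^s_{\Omega^c} = \{0\}$. But then $(H^s_{\Omega^c})^\perp = H^s(\R^n)$, and condition~\rf{s3} holds trivially since $\scrD(\Omega) \subset H^s(\R^n)$.

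More carefully, I would proceed as follows. First I would record that $\Omega^c$ is closed (as the complement of an open set), so that the $s$-nullity of $\Omega^c$ applies directly to the closed set $\Omega^c$ itself in the definition, giving $H^s_{\Omega^c} = \{0\}$. Second, I would note that the orthogonal complement of the trivial subspace in any Hilbert space is the whole space, whence $(H^s_{\Omega^c})^\perp = H^s(\R^n)$. Third, since every $\phi \in \scrD(\Omega)$ is a fortiori an element of $H^s(\R^n)$, the inclusion $\scrD(\Omega) \subset (H^s_{\Omega^c})^\perp$ is immediate. Finally, the equivalence \rf{s3}$\Leftrightarrow$\rf{s1} of Lemma~\ref{lem:normEquality} yields that $|_\Omega:\tH^s(\Omega)\to H^s_0(\Omega)$ is a unitary isomorphism, as claimed.

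I do not anticipate any genuine obstacle here, as the argument is essentially a one-line reduction once Lemma~\ref{lem:normEquality} is in hand; the only point requiring mild care is to apply the definition of $s$-nullity to the closed set $F = \Omega^c$ correctly, ensuring that \emph{all} closed subsets (here $\Omega^c$ is itself closed) support only the zero element, so that $H^s_{\Omega^c}=\{0\}$ rather than merely some proper subspace being trivial. The extreme example $\Omega = \R^n \setminus \{\mathbf 0\}$ then falls out by recalling that a single point is $s$-null precisely when $s \geq -n/2$, a fact established among the $s$-nullity results cited from \cite{HewMoi:15,ChaHewMoi:13}.
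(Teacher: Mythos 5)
Your proposal is correct and follows exactly the paper's own argument: since $\Omega^c$ is closed, $s$-nullity gives $H^s_{\Omega^c}=\{0\}$, hence $(H^s_{\Omega^c})^\perp=H^s(\R^n)\supset\scrD(\Omega)$, and condition \rf{s3} of Lemma~\ref{lem:normEquality} yields the conclusion. No differences worth noting.
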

\begin{proof}
The assumption that $\Omega^c$ is $s$-null means that $(H^s_{\Omega^c})^\perp=(\{0\})^\perp=H^s(\R^n)\supset \scrD(\Omega)$, so part \rf{s3} of Lemma \ref{lem:normEquality} holds and hence the result follows. 
\end{proof}

Conversely, we can demonstrate that, when the complement of $\Omega$ is not negligible, $|_\Omega:\tH^s(\Omega)\to H^s_0(\Omega)$ is not in general a unitary isomorphism except when $s\in\N_0$. 
\begin{prop}
\label{prop:IsometryIffNotN}
Assume that $\Omega$ is non-empty, open and bounded. 
Then the three equivalent statements in Lemma~\ref{lem:normEquality} hold  
if and only if $s$ is a non-negative integer.
\end{prop}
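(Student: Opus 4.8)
The plan is to reduce everything to part~\rf{s3} of Lemma~\ref{lem:normEquality}, namely $\scrD(\Omega)\subset(H^s_{\Omega^c})^\perp$, and to translate orthogonality through the Bessel potential operator. For $\phi\in\scrD(\Omega)$ write $w:=(1-\Delta)^s\phi$ for the tempered distribution with $\hat w(\bxi)=(1+|\bxi|^2)^s\hat\phi(\bxi)$; since $\hat\phi\in\scrS(\R^n)$ and the weight $(1+|\bxi|^2)^s$ is smooth with polynomially bounded derivatives, $w$ is in fact a Schwartz function. By Plancherel, for every $\psi\in H^s_{\Omega^c}$ we have $(\phi,\psi)_{H^s\Rn}=\int_{\R^n}(1+|\bxi|^2)^s\hat\phi\,\overline{\hat\psi}\,\rd\bxi=\int_{\R^n} w\,\overline\psi\,\rd\bx$. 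Hence, if $w\not\equiv 0$ on the open set $\R^n\setminus\overline\Omega$, then since $w$ is continuous one can pick $\psi\in\scrD(\R^n\setminus\overline\Omega)\subset H^s_{\Omega^c}$ with $\int w\,\overline\psi\,\rd\bx\ne0$, so $\phi\notin(H^s_{\Omega^c})^\perp$ and \rf{s3} fails. As $\Omega$ is bounded, $\R^n\setminus\overline\Omega$ is a non-empty open set and $\overline\Omega$ is compact, so the whole question comes down to whether $(1-\Delta)^s$ can send some $0\ne\phi\in\scrD(\Omega)$ to a function supported in the compact set $\overline\Omega$.

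For the ``if'' direction, suppose $s=m\in\N_0$. Then by~\eqref{eq:IntegralNorm} functions with disjoint support are orthogonal in $H^m\Rn$; since $\supp\phi\subset\Omega$ and $\supp\psi\subset\Omega^c$ are disjoint for every $\phi\in\scrD(\Omega)$ and $\psi\in H^m_{\Omega^c}$, we obtain $\scrD(\Omega)\subset(H^m_{\Omega^c})^\perp$, which is \rf{s3}. (Consistently, $(1-\Delta)^m$ is a differential operator, so $\supp w\subset\supp\phi\subset\Omega$.)

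For the ``only if'' direction I would argue the contrapositive: assuming $s\notin\N_0$, I exhibit $\phi\in\scrD(\Omega)$ with $\phi\ge0$ and $\phi\not\equiv0$ (which exists since $\Omega$ is non-empty and open) for which $w=(1-\Delta)^s\phi$ is \emph{not} compactly supported, whence $\supp w\not\subset\overline\Omega$ and \rf{s3} fails by the reduction above. I split into two cases. If $s<0$, then $w=G_{-2s}*\phi$, where $G_{-2s}$ is the Bessel potential kernel of positive order $-2s$, which is strictly positive on all of $\R^n$; since $\phi\ge0$ is not identically zero, $w(\bx)=\int_{\R^n} G_{-2s}(\bx-\mathbf{y})\phi(\mathbf{y})\,\rd\mathbf{y}>0$ for every $\bx$, so $w$ has full support. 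If $s>0$ with $s\notin\N$, I would invoke the Paley--Wiener--Schwartz theorem: were $w$ compactly supported, $\hat w$ would extend to an entire function on $\C^n$, and so would its restriction $\zeta\mapsto\hat w(\zeta\mathbf{e}_1)$ to the first coordinate axis. But on the real axis this restriction equals $(1+\zeta^2)^s\,\hat\phi(\zeta\mathbf{e}_1)$, where $\zeta\mapsto\hat\phi(\zeta\mathbf{e}_1)$ is entire with $\hat\phi(\ri\mathbf{e}_1)=(2\pi)^{-n/2}\int_{\R^n}\re^{x_1}\phi(\bx)\,\rd\bx>0$ by our choice of $\phi$; since $(1+\zeta^2)^s=(\zeta-\ri)^s(\zeta+\ri)^s$ has a branch-point singularity at $\zeta=\ri$ for $s\notin\N_0$, and the factor $(\zeta+\ri)^s\,\hat\phi(\zeta\mathbf{e}_1)$ is analytic and non-vanishing at $\ri$, the product has a genuine branch point at $\zeta=\ri$ and cannot be entire --- a contradiction.

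The main obstacle is the case $s>0$, $s\notin\N$: there the positivity trick is unavailable, because $(1-\Delta)^s$ is a nonlocal operator of positive order rather than a positive convolution, so one must read off the non-compactness of $\supp w$ from the singularity of the symbol. The Paley--Wiener reduction to the single complex variable $\zeta$ isolates the difficulty to the branch point of $(1+\zeta^2)^s$ at $\zeta=\ri$. Two points need care: (i) choosing $\phi$ so that $\hat\phi(\ri\mathbf{e}_1)\ne0$, which is ensured by taking $\phi\ge0$; and (ii) justifying rigorously that the branch point (monodromy factor $\re^{2\pi\ri s}\ne1$) survives multiplication by the entire, locally non-vanishing factor $\hat\phi(\zeta\mathbf{e}_1)$, so that no cancellation can make the product entire. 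Everything else --- the reduction to \rf{s3}, the computation $(\phi,\psi)_{H^s\Rn}=\int w\,\overline\psi\,\rd\bx$, and the ``if'' direction --- is routine.
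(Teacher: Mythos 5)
Your proof is correct, but it follows a genuinely different route from the paper's. Both arguments ultimately exploit, via a Paley--Wiener theorem, the branch singularity of $(1+|\bxi|^2)^s$ at complex frequencies, but the test elements of $H^s_{\Omega^c}$ differ. You reduce part \rf{s3} of Lemma~\ref{lem:normEquality} to a support statement about $w=(1-\Delta)^s\phi$ and test against arbitrary $\psi\in\scrD(\R^n\setminus\overline\Omega)$, which forces a case split: for $s<0$ the positivity of the Bessel kernel gives $w>0$ everywhere (a nice by-product: the obstruction is ``everywhere''), while for $s>0$, $s\notin\N$, you need the one-variable restriction $\zeta\mapsto\hat w(\zeta\mathbf{e}_1)$ and the monodromy argument at $\zeta=\ri$. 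The paper instead tests $\phi$ against its own translates $\phi_{\bd}$: the function $\chi(\bd):=(\phi,\phi_{\bd})_{H^s(\R^n)}$ is Schwartz with Fourier transform $(2\pi)^{n/2}(1+|\bxi|^2)^s|\hat\phi(\bxi)|^2$, which is not entire for $s\notin\N_0$, so $\chi$ cannot be compactly supported, and boundedness of $\Omega$ yields $\bd$ with $\phi_{\bd}\in\scrD(\overline\Omega^c)$ and $\chi(\bd)\ne0$. This handles all $s\notin\N_0$ (negative integers included) in one stroke, with no case split and no need to choose $\phi\ge0$. On the other hand, your treatment is more explicit on a point the paper glosses over: the paper simply asserts that $(1+|\bxi|^2)^s|\hat\phi(\bxi)|^2$ does not extend to an entire function ``because the factor has singularities,'' which implicitly requires the same non-cancellation/monodromy argument you spell out (an analytic factor vanishing to finite order at the branch point cannot remove the multivaluedness, since $\re^{2\pi\ri(s+k)}\ne1$ for $k\in\N_0$); your device of taking $\phi\ge0$ so that $\hat\phi(\ri\mathbf{e}_1)>0$ is a clean way to settle this. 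Two minor points of care in your write-up: the claim that the question ``comes down to'' $\supp w\subset\overline\Omega$ is only used in the one direction you actually need (failure of \rf{s3} when $\supp w\not\subset\overline\Omega$), since elements of $H^s_{\Omega^c}$ supported on $\partial\Omega$ are not seen by your test functions; and the identity $(\phi,\psi)_{H^s(\R^n)}=\int w\overline\psi\,\rd\bx$ should be read as a distributional pairing for general $\psi\in H^s_{\Omega^c}$, though it is a genuine integral for the $\psi\in\scrD(\R^n\setminus\overline\Omega)$ you use. Neither affects correctness.
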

\begin{proof}
We have seen in Lemma \ref{lem:same}\rf{f4} that $|_\Omega:\tH^s(\Omega)\to H^s_0(\Omega)$ is a unitary isomorphism when $s\in\N_0$, for any $\Omega$. 
When $s\not\in \N_0$ and $\Omega$ is bounded we shall prove that this does not hold by showing that statement  \rf{s3} of Lemma \ref{lem:normEquality} fails. 
Take any $\phi\in \scrD(\Omega)$ and define the translate $\phi_{\bd}(\bx):=\phi(\bx-\bd)$ for $\bd\in\R^n$. 
Then $\phi_{\bd}\in \scrD(\R^n)$. 
In fact, since $\Omega$ is assumed bounded, for large enough $|\bd|$ we have that $\phi_{\bd}\in  \scrD(\overline\Omega^c)\subset H^s_{\Omega^c}$, so that in particular $\supp{\phi}\cap\supp{\phi_{\bd}}$ is empty. 
Define $\chi(\bd):=(\phi,\phi_{\bd})_{H^s(\R^n)}=(\phi(\cdot),\phi(\cdot-\bd))_{H^s(\R^n)}$. Then the formula for the Fourier transform of a translate gives
\begin{align*}
\chi(\bd)=(\phi,\phi_{\bd})_{H^s(\R^n)} =
\int_{\R^n} \re^{\ri \bd\cdot\bxi} \mu(\bxi)\,\rd \bxi,
\end{align*}
where $\mu(\bxi):=(1+|\bxi|^2)^s |\hat{\phi}(\bxi)|^2=(1+\xi_1^2+\ldots \xi_n^2)^s |\hat{\phi}(\bxi)|^2$, with $\bxi=(\xi_1,\ldots,\xi_n)$. 
Since $\mu(\bxi)$ is an element of $\scrS(\R^n)$, $\chi(\bd)$ is also an element of $\scrS(\R^n)$, with Fourier transform $\hat\chi(\bxi)=(2\pi)^{n/2}\mu(\bxi)$. But for $s\neq 0,1,2,3,...$ the function $\mu(\bxi)$ does not extend to an entire function on $\C^n$ because the factor $(1+\xi_1^2+\ldots \xi_n^2)^s$ has singularities in $\C^n$. (E.g.\ for $n=1$, these singularities occur at the points $\bxi=\pm\ri$). Hence by the Paley--Wiener Theorem (see e.g.\ \cite[Theorem 2.3.21]{Grafakos})
$\chi(\bd)$ cannot have compact support in $\R^n$. 
As a result we can always find $\bd$ with $|\bd|$ large enough that $\phi_{\bd}\in  \scrD(\overline\Omega^c)\subset H^s_{\Omega^c}$ and
$\chi(\bd)=(\phi,\phi_{\bd})_{H^s(\R^n)}\neq 0$.
\end{proof}

\begin{rem}
In proving the ``only if'' statement in Proposition \ref{prop:IsometryIffNotN} we required $\Omega$ to be bounded.
With minor modifications the same proof works for some unbounded $\Omega$.
A first example is when $\Omega^c$ is bounded with non-empty interior.
%
%
A second example is when either $\Omega$ itself or $\overline{\Omega}^c$, the interior of the complement of $\Omega$, assumed to be non-empty, is contained in the hypograph $\{\bx\in\R^n,x_n>g(x_1,\ldots,x_{n-1})\}$, where $g:\R^{n-1}\to \R$ satisfies $\lim_{|\widetilde\bx|\to\infty}g(\widetilde\bx)=\infty$; the proof of Proposition~\ref{prop:IsometryIffNotN} works in this case because $\chi(-\bd)=\overline{\chi(\bd)}$.
%
The result does not hold for every open set $\Omega$, as Proposition~\ref{prop:isometry} demonstrates. However, we conjecture that the statement of Proposition~\ref{prop:IsometryIffNotN} holds for any $\Omega$ for which $\Omega^c$ has non-empty interior. But proving this conjecture appears to be an open problem. 
\end{rem}

\begin{rem}
\label{rem:extensions}
Proposition \ref{prop:IsometryIffNotN} illustrates the fact that Sobolev norms with non-natural-number indices are non-local. In particular it implies that given any $s\in\R\setminus\N_0$, any $\phi\in\scrD(\R^n)$ and any (arbitrarily large) bounded set $\Omega$ containing the support of $\phi$, there exists $\psi\in\scrD(\R^n)$ with support in $\Omega^c$ such that $\|\phi+\psi\|_{H^s(\R^n)}<\|\phi\|_{H^s(\R^n)}$. 

As an illustrative example, 
we exhibit a 
sequence $\{\Phi_N\}_{N\in\N_0}\subset H^{-1}(\R)$ of distributions with compact support $\supp\Phi_N\subset[0,2N]$ such that each one of them is an extension of the preceding one 
(i.e.\  $\Phi_{N+1}|_{(-\infty,2N+\frac12)}=\Phi_N|_{(-\infty,2N+\frac12)}$ for all $N\ge0$)
and their norms are strictly decreasing in $N$, i.e.\ 
$\|\Phi_{N+1}\|_{H^{-1}(\R)}<\|\Phi_{N}\|_{H^{-1}(\R)}$. 
Such a sequence can be defined as follows: choose any $0<\alpha<1/\re$ and set 
(where $\delta_x$ denotes the Dirac delta centred at $x\in\R$)\footnote{To fit our convention of using anti-linear functionals, $\delta_x$ acts on test functions $\phi\in\scrS(\R^n)$ by $\delta_x(\phi)=\overline{\phi(x)}$.}
$$\Phi_0:=\delta_0,\qquad 
\Phi_N:=\sum_{k=0}^{2N}(-\alpha)^k\delta_k
=\Phi_{N-1}-\alpha^{2N-1}\delta_{2N-1}+\alpha^{2N}\delta_{2N}, \quad N\in \N.$$
The Fourier transform formula 
$\hat\delta_x=\frac1{\sqrt{2\pi}}\re^{\ri x\xi}$ 
and the identity $\int_\R (1+\xi^2)^{-1}\re^{\ri a\xi}\,\rd\xi=\pi \re^{-|a|}$
imply that the $H^{-1}(\R)$-scalar product of two delta functions is\begin{equation}\label{eq:DeltaProd}
(\delta_x,\delta_y)_{H^{-1}(\R)}=\frac1{2} \re^{-|x-y|},
\end{equation}
giving
\begin{align*}
\|\Phi_N\|^2_{H^{-1}(\R)}
=\frac12\sum_{k=0}^{2N}\alpha^{2k} + \sum_{0\le j<k\le 2N} (-\alpha)^{j+k}\re^{-(k-j)}.
\end{align*}
With some manipulations it is not difficult to prove that every extension strictly reduces the norm:
\begin{align*}
&\|\Phi_N\|^2_{H^{-1}(\R)}-\|\Phi_{N-1}\|^2_{H^{-1}(\R)}
= -\frac{\alpha^{4N-2}}{2(1+\alpha \re)}\left((1+\alpha^2)(1-\alpha\re)+2(1-\alpha/\re)(\alpha \re)^{1-2N} \right)<0.
\end{align*}
We point out that while the sequence $\{\|\Phi_{N}\|_{H^{-1}(\R)}\}_{N=1}^\infty$ is decreasing, our results in \S\ref{subsec:minext} (equation \eqref{eqn:Qminus1} in particular) imply that for every $N\in\N_0$ the extension of $\Phi_N|_{(-\infty,2N+\frac12)}$ with minimal $H^{-1}(\R^n)$ norm is supported in $(-\infty,2N+\frac12]$ and has the expression  
$\Phi_N+c\delta_{2N+1/2}$ for some $c\in\C$.
\end{rem}


\section{The space \texorpdfstring{$(H^s_{\Omega^c})^\perp$}{HsOmegaCperp} and minimal norm extensions}
\label{subsec:minext}

From \eqref{eq:RestrIsUnitary}, elements of $(H^s_{\Omega^c})^\perp$ are the extensions of elements of $H^s(\Omega)$ with minimal $H^s(\R^n)$ norm. 
In this short section we make some remarks on the nature of elements of $(H^s_{\Omega^c})^\perp$, and on minimal norm extensions. 
We also refer the reader to the related discussion in Remark \ref{rem:extensions} above.

For $m\in \N_0$, the fact that functions with disjoint support are orthogonal in $H^m(\R^n)$
(cf.\ \rf{eq:IntegralNorm} and the sentence following it) implies that $ \cirHm(\Omega)\subset (H^m_{\Omega^c})^\perp$. Thus we have
\begin{align*}
\tH^m(\Omega)\subset \cirHm(\Omega)\subset 
(H^m_{\Omega^c})^\perp \subset \big(\tH^{m}(\overline{\Omega}^c)\big)^\perp , \qquad m\in \N_0,
\end{align*}
which by duality \eqref{eq:Duality} implies that
\begin{align}\label{eq:MinExtRelationDual}
(H^{-m}_{\Omega^c})^\perp 
\subset \tH^{-m}(\Omega) \subset H^{-m}_{\overline\Omega} , \qquad m\in \N_0.
\end{align}
In particular,
minimal extensions from $H^{-m}\OO$ to $H^{-m}(\R^n)$ are supported in $\overline\Omega$. 
Hence if $u\in H^{-m}(\Omega)$ then there exists $U\in H^{-m}_{\overline\Omega}$ with $U|_{\Omega}=u$;
furthermore given any such $U$ the minimal extension of $u$ is given by $Q_{-m} U=U + w$ where $w\in H^{-m}_{\partial \Omega}$.

For example, if $\Omega=(a,b)\subset\R$ then the action of $Q_{-m}$ on $U\in H^{-m}_{\overline\Omega}$ can be written explicitly, since $H^{-m}_{\partial \Omega}$ is finite-dimensional and its elements 
are (derivatives of) delta functions supported in $\partial\Omega=\{a,b\}$. 
In particular, for $U\in H^{-1}_{\overline\Omega}$,
\begin{align}
\label{eqn:Qminus1}
Q_{-1} U = U + c_a \delta_a + c_b\delta_b,
\qquad \textrm{for some }c_a,c_b\in\C.
\end{align}
Using \eqref{eq:DeltaProd}, minimisation of $\|U+c_a\delta_a+c_b\delta_b\|_{H^{-1}(\R)}^2$  shows that 
\begin{align*}
c_a=\frac{(U,\delta_b)_{H^{-1}(\R)}-\re^{b-a}(U,\delta_a)_{H^{-1}(\R)}}{\sinh (b-a)},
\quad
c_b=\frac{(U,\delta_a)_{H^{-1}(\R)}-\re^{b-a}(U,\delta_b)_{H^{-1}(\R)}}{\sinh (b-a)}.
\end{align*}
For instance, if $u\in H^{-1}(\Omega)$ is given by $u = \delta_x$ for some $a<x<b$ (viewed as a distribution on $\Omega=(a,b)$), then clearly $U:=\delta_x$ (viewed as a distribution on $\R$) is an extension of $u$, whose projection onto $(H^{-1}_{\Omega^c})^\perp$ is given by \rf{eqn:Qminus1}. 
In this case the choice of $c_a,c_b$ that minimises the $H^{-1}(\R^n)$ norm of \rf{eqn:Qminus1} is
\begin{align*}
c_a=-\frac{\sinh(b-x)}{\sinh(b-a)}, \qquad c_b=-\frac{\sinh (x-a)}{\sinh(b-a)},
\end{align*}
which give
$$\|\delta_x\|_{H^{-1}(\Omega)}^2=
\|\delta_x+c_a\delta_a+c_b\delta_b\|_{H^{-1}(\R)}^2
=\frac{\sinh(b-x)\sinh(x-a)}{\sinh(b-a)}
<\frac12=\|\delta_x\|_{H^{-1}(\R)}^2.
$$

However, in general $(H^s_{\Omega^c})^\perp\not\subset H^s_{\overline\Omega}$ when $-s\not\in\N_0$, i.e.\ elements of $(H^s_{\Omega^c})^\perp$ do not generally have their support in $\overline\Omega$. Explicit expressions for the minimal-norm extensions of elements of $H^1(\Omega)$ and $H^2(\Omega)$ for the special case $\Omega=(a,b)\subset\R$ have been presented in \cite[Lemma 4.12]{InterpolationCWHM} and lead to the formulas for the norms:
\begin{align*}
\|\phi\|_{H^1(\Omega)}^2 =&
|\phi(a)|^2 + |\phi(b)|^2 + \int_{a}^b\left(|\phi|^2+ |\phi^\prime|^2\right) \,\rd x,\\
\|\phi\|_{H^2(\Omega)}^2=
&|\phi(a)|^2+|\phi'(a)|^2+|\phi(a)-\phi'(a)|^2+
|\phi(b)|^2+|\phi'(b)|^2+|\phi(b)+\phi'(b)|^2
\\
&+\int_a^b (|\phi|^2+2|\phi'|^2+|\phi''|^2) \,\rd x.
\end{align*}
(Note that we have corrected a sign typo present in \cite[eq.~(26)]{InterpolationCWHM}.)

\addcontentsline{toc}{section}{References}


\begin{thebibliography}{1}

\bibitem{Brezis}
{\sc H.~Brezis}, {\em Functional Analysis, {S}obolev Spaces and Partial
  Differential Equations}, Universitext, Springer, New York, 2011.

\bibitem{InterpolationCWHM}
{\sc S.~N. Chandler-Wilde, D.~P. Hewett, and A.~Moiola}, {\em Interpolation of
  {H}ilbert and {S}obolev spaces: quantitative estimates and counterexamples},
  Mathematika, 61 (2015), pp.~414--443.

\bibitem{ChaHewMoi:13}
\leavevmode\vrule height 2pt depth -1.6pt width 23pt, {\em Sobolev spaces on
  non-{L}ipschitz subsets of $\mathbb{R}^n$ with application to boundary
  integral equations on fractal screens}, 2016.

\bibitem{Grafakos}
{\sc L.~Grafakos}, {\em Classical Fourier Analysis}, Springer, 2008.

\bibitem{HewMoi:15}
{\sc D.~P. Hewett and A.~Moiola}, {\em On the maximal {S}obolev regularity of
  distributions supported by subsets of {E}uclidean space}, arXiv preprint,
  arXiv:1507.02698,  (2015).

\bibitem{Ka:95}
{\sc T.~Kato}, {\em Perturbation Theory for Linear Operators}, Springer,
  corrected printing of 2nd~ed., 1995.

\bibitem{LiMaI}
{\sc J.-L. Lions and E.~Magenes}, {\em Non-Homogeneous Boundary Value Problems
  and Applications I}, Springer-Verlag, 1972.

\bibitem{Maz'ya}
{\sc V.~G. Maz'ya}, {\em Sobolev Spaces with Applications to Elliptic Partial
  Differential Equations}, Springer, 2nd~ed., 2011.

\bibitem{McLean}
{\sc W.~McLean}, {\em Strongly Elliptic Systems and Boundary Integral
  Equations}, CUP, 2000.

\end{thebibliography}

%
%

\end{document}